\newtheorem{theorem}{Theorem}
\newtheorem{lemma}[theorem]{Lemma}
\newtheorem{proposition}[theorem]{Proposition}
\newtheorem{remark}[theorem]{Remark}
\definecolor{verylightgray}{gray}{0.95}
\newcolumntype{L}{>{$}l<{$}} 
\newcolumntype{G}{>{\columncolor{verylightgray}$}l<{$}}
\def\R{\mathbb{R}}
\def\AA{\mathcal{A}}
\def\DD{\mathcal{D}}
\def\RR{\mathcal{R}}
\def\UU{\mathcal{U}}
\def\phi{\varphi}
\def\Sin{s_{\text{in}}}
\def\Vin{v_{\text{in}}}
\def\dmax{d_{\max}}
\def\mumax{\mu_{\max}}
\def\rhomax{\rho_{\max}}
\def\phimax{\varphi_{\max}}
\def\psimax{\psi_{\max}}
\def\qmax{q_{\max}}
\def\qmin{q_{\min}}
\def\asing{\alpha_{\mathrm{singular}}}
\def\dsing{d_{\mathrm{singular}}}
\def\xx{\mathbf{x}}
\def\xeq{\xx^*}
\def\xa{\xx^A}
\def\xb{\xx^B}
\def\xoa{\xx_0^A}
\def\xob{\xx_0^B}
\def\xia{\xx_1^A}
\def\xib{\xx_1^B}
\def\xo{\xx_0}
\def\xio{\xx_{1,0}}
\def\xii{\xx_{1,1}}
\def\dxt{\dot{\xx}}
\def\dxa{\dxt^A}
\def\dxb{\dxt^B}
\def\uu{\mathbf{u}}
\def\vv{\mathbf{v}}
\def\lmd{{\boldsymbol\lambda}}
\def\ls{\lambda_s}
\def\le{\lambda_e}
\def\lv{\lambda_v}
\def\lc{\lambda_c}
\def\lq{\lambda_q}
\def\dlmd{\dot{\lmd}}
\def\dt{\text{d}t}
\def\pp#1{\left(#1\right)}
\def\bb#1{\left[#1\right]}
\def\cb#1{\left\{#1\right\}}
\def\dotp#1{\left\langle#1\right\rangle}
\def\transp#1{{#1}^{\top}}
\def\L{L}
\def\gL{g{\cdot}\L^{-1}}
\def\perday{\text{day}^{-1}}
\def\gg{g{\cdot}g^{-1}}
\def\ggday{\gg{\cdot}\perday}
\title{\LARGE \bf
Optimization of microalgae biosynthesis via controlled algal-bacterial symbiosis* 
}
\author{R. Asswad$^1$, W. Djema$^{2}$, O. Bernard$^{2}$, J.-L. Gouzé$^{3,\dag}$, E. Cinquemani$^{1,\dag}$
\thanks{*Work 
supported in part by project Ctrl-AB [ANR-20-CE45-0014]}
\thanks{$^\dag$These authors contributed equally to the work}
\thanks{$^1$MICROCOSME, Centre Inria de l'Université Grenoble Alpes
        {\tt\small rand.asswad@inria.fr} ; {\tt\small eugenio.cinquemani@inria.fr}}%
\thanks{$^2$BIOCORE, Centre Inria d'Université Côte d'Azur
        {\tt\small walid.djema@inria.fr} ; {\tt\small olivier.bernard@inria.fr}}%
\thanks{$^3$MACBES, Centre Inria d'Université Côte d'Azur
        {\tt\small jean-luc.gouze@inria.fr}
}%
}
\begin{document}

\maketitle
\thispagestyle{empty}
\pagestyle{empty}

\begin{abstract}

We investigate optimization of an algal-bacterial consortium, where 
an exogenous control input modulates bacterial resource allocation between growth and synthesis of a resource that is limiting for algal growth.
Maximization of algal biomass synthesis is pursued in a continuous bioreactor, with dilution rate as an additional control variable. We formulate optimal control in the two variants of static and dynamic control problems, and address them by theoretical and numerical tools. We explore convexity of the static problem and uniqueness of its solution, and show that the dynamic problem displays a solution with bang-bang control actions and singular arcs that result in cyclic control actions. We finally discuss the relation among the two solutions and show the extent to which dynamic control can outperform static optimal solutions.
\end{abstract}

\section{Introduction}\label{intro}

Microbial consortia, that is, communities of several microbial species in interaction, are common
in nature as the result of co-evolution \cite{sachs2011evolutionary}. Such natural symbioses are increasingly exploited to reduce the needs in fertilizers, vitamins or micro nutriments, by having these elements produced within the consortium instead of adding them artificially \cite{rapp2020partners}.
A chief example is that of mixed cultures of algae and bacteria. The potential of microalgae for the production of food, feed, cosmetics or pharmaceutics molecules has been highlighted in the last decade \cite{rizwan2018exploring}. It has been shown that co-culturing algae with probiotic bacteria could lead to a cost decrease, higher robustness of the culture and even growth enhancement \cite{palacios2022microalga}. 

The drawback of using several microorganisms in place of single species is the increased complexity of the resulting dynamics,
with challenging issues for process control and optimization~\cite{zomorrodi2016synthetic}.
State-of-the-art modelling and experimental capabilities sharpened the mathematical characterization of bioprocess dynamics \cite{lopatkin2020predictive}. While a range of methods for online estimation, optimization and control have been developed over the past decades \cite{simutis2015bioreactor}, they are often based on models of the growth of a single species in a bioreactor. Control methods dedicated to microbial consortia are still scarce~\cite{aditya2021light,salzano2022ratiometric} 
and much remains to be explored.

In this paper, we focus on the optimal control of an algal-bacterial consortium growing in a continuous bioreactor. We consider the scenario where algal growth is dependent on a limiting resource, such as vitamins, synthesized by a bacterial population (see also~\cite{mauri2020enhanced} and references therein for consortia displaying similar interactions).
Bacterial synthesis of the vitamins is regulated by an exogenous control input (\textit{e.g.} by the use of optogenetics~\cite{benisch2024unlocking}), and we make the realistic assumption that, as in analogous resource re-allocation scenarios~\cite{WeisseEtAl2015}, increased synthesis comes at the expense of reduced bacterial growth. Motivated by a synthetic consortium under development comprising microalga \textit{Chlorella} and bacterium \textit{Escherichia coli}, this provides a rich scenario to study control of microbial consortia and maximization of productivity of algal biomass, a proxy for optimized biosynthesis of added value compounds. 

We consider two alternative Optimal Control Problems (OCPs): a Static OCP (SOCP), where productivity is to be maximized at equilibrium relative to dilution and vitamin synthesis rate parameters, and a Dynamic OCP (DOCP), where total productivity is to be maximized over a finite time period with time-varying rate functions. 
For the SOCP, we will first explore equilibria and stability of the algal-bacterial consortium, notably in the conditions for species coexistence. We then investigate problem convexity and, in the same spirit as~\cite{mauri2020enhanced}, we numerically illustrate the control tradeoffs and the uniqueness of the solution. For the DOCP, we seek the solution via geometric control theory (\cite{schattler2012geometric,vinter2010optimal}) and the Pontryagin's Maximum Principle (PMP, see \textit{e.g.}, \cite{schattler2012geometric,pontryagin2018mathematical}  
and related applications in~\cite{djema2022optimal,yegorov2019optimal,giordano2016dynamical}). We establish a solution in the form of bang-bang control actions and singular arcs.
Then, on a simulated case study,
we use the numerical optimization software {\tt Bocop}~\cite{Bocop} to illustrate the theoretical results, extend them by showing in particular the cyclic nature of the solution, and relate the solution with that of the corresponding SOCP.

The paper is organized as follows. In Section \ref{model}
we introduce and analyze the algal-bacterial consortium model.  The OCPs of interest are stated in Section \ref{pb}. Section \ref{socp} is dedicated to the
analysis and numerical solution of the SOCP, while Section
\ref{docp} focuses on the theoretical solution of the DOCP.
Simulation results comparing the solutions of the SOCP and
of the DOCP are developed in Section \ref{results}.
Conclusions and perspectives are in Section \ref{conclusion}. 

\section{Microbial consortium model and analysis}\label{model}

We present a model that describes the co-culturing of \textit{E. coli} bacteria and \textit{Chlorella} microalgae in a continuously stirred-tank bioreactor. Bacterial growth dynamics are described by the well-known Monod model under glucose limitation~\cite{bastin_1990}. Bacteria are engineered so as to synthesize a vitamin that is a growth-limiting resource for the algal strain. Vitamin synthesis is controlled through optogenetics \cite{benisch2024unlocking}, and it is assumed that allocation of resources to vitamin synthesis slows down bacterial growth~(\cite{WeisseEtAl2015} and own experimental evidence). 
Algal growth dynamics are described by the Caperon-Droop model \cite{smith_1994}, treating vitamins as the limiting growth substrate. Also known as variable yield model, this model accounts for the import of substrate contributing to an internal quota and subsequent utilization of this quota for algal biomass synthesis. 

Let $s$, $e$, $v$, and $c$ be the glucose, bacterial biomass,
secreted vitamin and algal biomass concentration in the
(fixed volume) bioreactor, in the same order. Let $q$ be the
internal algal vitamin quota. Defining the system state
$\xx(t)=\transp{(s(t),e(t),v(t),q(t),c(t))}$,
its dynamics are given by
\begin{align}
\dot{s} &= -\frac{1}{\gamma}\phi(s)e + d(t)(\Sin - s)\label{eq:ds}\\
\dot{e} &= (1 - \alpha(t))\phi(s)e - d(t)e\label{eq:de}\\
\dot{v} &= \alpha(t)\beta\phi(s)e - \rho(v)c - d(t)v\label{eq:dv}\\
\dot{q} &= \rho(v) - \mu(q)q\label{eq:dq}\\
\dot{c} &= \mu(q)c - d(t)c\label{eq:dc}
\end{align}
with functions $\phi$ and $\rho$ defined over $[0,\infty)$,
and $\mu$ defined over $[\qmin,\infty)$, given by
\begin{equation*}
\phi(s)=\frac{\phimax s}{k_s+s},
\rho(v)=\frac{\rhomax v}{k_v+v},
\mu(q)=\mumax\pp{1-\frac{\qmin}{q}}.
\end{equation*}
State $\xx$ takes values in the space $\Omega=\R_+\setminus\cb{q<\qmin}$.

We consider throughout the article a constant substrate input flow $\Sin$.
Constant $\gamma$ represents the bacterial growth yield,
$\phi(s)$ and $\mu(q)$ are the specific growth rates
of bacteria and microalgae, respectively. $\rho(v)$ is the vitamin uptake
rate, $d(t)$ is the dilution rate and $\beta$ is the vitamin production yield.
All the model parameters are strictly positive.
Functions $\phi$ and $\rho$ are called Monod functions as they have the form
$t\mapsto at/(b+t)$ defined on $\R_+$ with $a$ and $b$ strictly positive.
Factors $1-\alpha(t)$ and $\alpha(t)$ in Eq.~\eqref{eq:de}-\eqref{eq:dv} represent the allocation of resources toward biomass or vitamin synthesis in response to optogenetic control $\alpha(t)$. For simplicity we do not model the optogenetic system response dynamics 
to light induction (the actual exogenous control input). We simply let $\alpha(t)$ be the control variable and assume we can set it to any value in $[0,1]$ at any time $t$.

For numerical simulations only, we will 
refer to the specific choice of parameter values in Table~\ref{tab:jc_params} (ratios of similar units, \textit{e.g.} $\gg$, refer to distinct molecules, \textit{e.g.} grams of vitamins per gram of biomass). Parameter values pertaining the algal dynamics ($\rhomax$, $\qmin$ and $\mumax$) were determined based on experimental data obtained with collaborators at the Laboratoire d'Océanographie de Villefranche (IMEV, Villefranche-sur-Mer, France). Bacterial growth parameters $\gamma$, 
$\phimax$ and $k_s$ were borrowed from~\cite{mauri2020enhanced}, with $\phimax$ reduced so as to account for the growth of bacteria at a lower-than-optimal temperature (well below 37$^\circ$ for optimal algal growth). Yield $\beta$ was borrowed from~\cite{lin_2014}. 

\begin{table}[h]
    \centering
    \caption{Model parameters}
    \label{tab:jc_params}
    \begin{tabular}{|G|LL|G|LL|}
        \hline
        k_v & 0.57 & m\gL &
        k_s & 0.09& \gL \\
        \rhomax & 27.3 & m\ggday&
        \phimax & 6.48 & \perday\\
        \qmin & 2.7628 &  m\gg &
        \gamma & 0.44 & \gg\\
        \mumax & 1.0211 & \perday &
        \beta & 23 & m\gg\\
        \hline
    \end{tabular}
\end{table}

\subsection{Steady states and asymptotic behavior}

The system can be studied in two parts: the dynamics of bacterial growth
$\xb(t)=\transp{(s(t),e(t))}\in\Omega_B=\R_+^2$, and those of algal growth
$\xa(t)=\transp{(v(t),q(t),c(t))}\in\Omega_A=\R_+^3\setminus\cb{q<\qmin}$.
While both systems have been studied separately
\cite{bastin_1990,smith_1994,lange_1992}, in this paper a novel consortium
system is presented and studied.
The model is a cascade autonomous dynamical system
\begin{align}
\dxb(t) &= f_B(\xb(t))\label{eq:bac}\\
\dxa(t) &= f_A(\xb(t), \xa(t))\label{eq:alg}
\end{align}
where the bacterial dynamics (\ref{eq:bac}) do not depend on $\xa(t)$.

\subsubsection{Bacterial steady states}
The system (\ref{eq:bac}) has a trivial equilibrium point
$\xob=\transp{(\Sin, 0)}$ referred to as the bacterial washout equilibrium.
A nontrivial equilibrium exists $\xib=\transp{(s^*,e^*)}$
if $d<(1-\alpha)\phi(\Sin)$ with
\begin{equation}\label{eq:Xb-coord}
s^*=\phi^{-1}\pp{\frac{d}{1-\alpha}}~\text{and}~
e^*=(1-\alpha)\gamma(\Sin - s^*).
\end{equation}
If $\xib$ exists, it is globally asymptotically stable (GAS) over
$\Omega_B\setminus\cb{e=0}$ and the washout is unstable,
otherwise the latter is GAS over $\Omega_B$ \cite{harmand_2007}.

Let $\Vin(\xb) = \alpha\beta\phi(s)e/d$, it follows that $\Vin=0$
at bacterial washout and if $\xib$ exists then
\begin{equation}\label{eq:vin}
\Vin^*=\Vin(\xib)=\alpha\beta\gamma(\Sin-s^*)>0.
\end{equation}
This allows reformulating the system (\ref{eq:alg}) in the usual
form of a Droop model.

\subsubsection{Algal steady states}

Algal equilibria can be investigated by exploring the zero
dynamics of \eqref{eq:alg}:
{\small
\begin{align}
0 &= -\rho(v)c + d(\Vin - v) & &\implies c=d(\Vin - v)/\rho(v) \label{eq:Iv}\\
0 &= \rho(v) - \mu(q)q & &\implies \rho(v) = \mu(q)q \label{eq:Iq}\\
0 &= \mu(q)c - dc & &\implies c=0\text{ or }\mu(q) = d \label{eq:Ic}
\end{align}
}

\ding{111} The case of $\Vin=0$ leads to a degenerate Droop model
that has a single equilibrium at $\transp{(0,\qmin,0)}$.

\ding{111} In the nondegenerate case of $\Vin>0$, the algal system
has up to two distinct equilibria that we explore at the bacterial
equilibrium $\xib$ for which the vitamin feed is $\Vin^*$.

{\scriptsize\ding{117}}  If $c=0$ the system is at an algal washout equilibrium
$\xoa=\transp{(\Vin^*,q_0,0)}$ with $q_0=\qmin+\rho(\Vin^*)/\mumax$.

{\scriptsize\ding{117}}  For $c>0$, the distinct equilibrium $\xia=\transp{(v^*,c^*,q^*)}$
might exist under suitable conditions.
From \eqref{eq:Ic}, $q^*=\mu^{-1}(d)$ if $d<\mumax$.
Additionally, if $\mu(q^*)q^*<\rhomax$ (\textit{i.e.}
$q^*<\qmax = \qmin+\rhomax/\mumax$), then \eqref{eq:Iq} has a unique
solution $v^*=\rho^{-1}(\mu(q^*)q^*)$.
Injecting in (\ref{eq:Iv}) gives $c^*=(\Vin^*-v^*)/q^*$
that is well-defined if $v^*<\Vin^*$.

For convenience, we introduce the function $\psi$ such that
$\psi^{-1}(y)=\rho^{-1}(y\cdot\mu^{-1}(y))$.
Hence, $v^*=\psi^{-1}(d)$.
It can be shown that $\psi(v) = \psimax v/(k_c + v)$ with
$\psimax = \mu(\qmax)=\mumax\rhomax/(\rhomax + \qmin\mumax)$
and $k_c = k_v\qmin\mumax/(\rhomax + \qmin\mumax)$,
$\psi$ is therefore a Monod function.
Consequently, $\psi$ is increasing and bounded above by $\psimax$.
Thus, $\psi^{-1}(d)=v^*<\Vin^* \implies d<\psi(\Vin^*) < \psimax < \mumax$,
this in turn guarantees $q^*<\qmax$.
This establishes $d<\psi(\Vin^*)$ as a necessary and sufficient condition
for the existence of $\xia$ defined at
\begin{equation}\label{eq:Xa-coord}
    v^*=\psi^{-1}(d),~q^*=\mu^{-1}(d),~c^*=\frac{\Vin^*-v^*}{q^*}.
\end{equation}

Although the vitamin feed $\Vin^*$ is meaningful for the algal model
and convenient for defining $\xia$ and its stability,
it carries less significance in the consortium model as it depends
on the parameters $\alpha$ and $\Sin$.
In order to relate the existence of $\xia$ to these parameters,
we inject \eqref{eq:Xa-coord} and \eqref{eq:vin} in the inequality
$v^*<\Vin^*$, which gives
$\psi^{-1}(d)<\alpha\beta\gamma\pp{\Sin-\phi^{-1}(d/(1-\alpha)}$.
Introducing the increasing function $\psi_\alpha$ defined such that
\begin{equation}\label{eq:psi-alpha}
\psi_\alpha^{-1}(y) = \phi^{-1}\pp{\frac{y}{1-\alpha}} + \frac{\psi^{-1}(y)}{\alpha\beta\gamma}
\end{equation}
allows rewriting the last inequality as $\psi_{\alpha}^{-1}(d)<\Sin$.
Subsequently, $\xia$ exists if and only if $d<\psi_\alpha(\Sin)$. 
When $\xia$ exists, it is GAS over $\Omega_A\setminus\cb{c=0}$ and
the washout is unstable. Otherwise, the washout point is GAS over $\Omega_A$.
The stability of these equilibria is extensively studied in \cite{lange_1992}.

\subsubsection{Consortium's steady states}

The algal-bacterial system has up to three equilibrium points:
A washout equilibrium for both species always exists at
$\xo=\transp{(\Sin,0,0,\qmin,0)}$, an algal washout point at
$\xio=\transp{(s^*,e^*,\Vin^*,q_0,0)}$ if
$d<(1-\alpha)\phi(\Sin)$, and an equilibrium with
both species at $\xii=\transp{(s^*,e^*,v^*,q^*,c^*)}$
if $d<\psi(\Vin^*)$.
Stability of these equilibria (Table \ref{tab:eq_stability})
results from the stability of (\ref{eq:bac}-\ref{eq:alg})
and the boundedness of all orbits of the full model
\cite{seibert_1990}.
Stability is not adressed at bifurcation values for its
practical insignificance \cite{harmand_2007}.
From now on, $\xii$
is referred to as the functional equilibrium, denoted $\xeq$,
being the equilibrium of interest with nonzero bacteria and algae.

\begin{table}[H]
    \centering
    \caption{Stability of equilibria over
    $\Omega\setminus\cb{e=0,c=0}$\\
    with $d_1=\psi_\alpha(\Sin)$ and $d_2=(1-\alpha)\phi(\Sin)$}
    \label{tab:eq_stability}
    \begin{tabular}{|G|c|c|c|}
        \hline\rowcolor{verylightgray}
        & $0 < d < d_1$
        & $d_1 < d < d_2$
        & $d_2 < d$\\\hline
        \xo     & unstable & unstable & GAS\\
        \xio    & unstable & GAS & --\\
        \xii    & GAS       & -- & --\\\hline
    \end{tabular}
\end{table}

\section{Optimal Control Problems}\label{pb}

The objective of our study is to maximize the production of
microalgal biomass in a photobioreactor system through two
control variables: the vitamin production activation $\alpha(t)$
and the dilution rate $d(t)$.
Let $\uu(t)=\transp{(\alpha(t),d(t))}$.
The produced microalgae are harvested from the bioreactor output
which is the function $f_0(\xx(t),\uu(t))=d(t)c(t)$.
The system dynamics \eqref{eq:ds}-\eqref{eq:dc} can be expressed
as $\dxt=f(\xx,\uu)$.

The optimal control problem (OCP) for the system
in section \ref{model} is approached in two ways:
the static OCP in which the control inputs are chosen to maximize
the produced microalgae biomass at equilibrium,
and the dynamical OCP where the total harvested microalgae biomass
is optimized over a finite time period.
In both cases, the problem consists of choosing
$\alpha\in\AA$ and $d\in\DD$, where $\UU=\AA\times\DD$
is the relevant set of admissible controls, in order to maximize
the harvested microalgae from the bioreactor.
The two problems are formalized next.

The definition of the dynamic and static OCPs that we propose
in this section closely follows the generic formulation found in
\cite{trelat2015turnpike,caillau2022turnpike,djema2021turnpike}.

\subsection{Static Optimal Control Problem (SOCP)}\label{pb-socp}

The SOCP consists of choosing admissible control inputs
$\uu=\transp{(\alpha,d)}\in\UU$ that maximize
$f_0(\xx,\uu)=d\cdot c$ at the equilibrium over the set
of reachable states $\RR\subset\Omega$, \textit{i.e.},
\begin{equation}\label{eq:socp}\tag{SOCP}
\max\limits_{\substack{(\xx,\uu)\in\RR\times\UU\\ f(\xx,\uu)=0}}
f_0(\xx,\uu).
\end{equation}

\subsection{Dynamic Optimal Control Problem (DOCP)}\label{pb-docp}

We aim to find optimal control function $\uu\in\UU=\AA\times\DD$
to maximize the total production of microalgal biomass over a finite
time horizon $[0,t_f]$, \textit{i.e.},
\begin{equation}\tag{DOCP}
\begin{aligned}
&\max\limits_{\uu\in\UU}
\int_0^{t_f} f_0(\xx(t),\uu(t))\dt \\
&\text{subject to }\dxt(t)=f(\xx(t),\uu(t)), \quad\xx(t)\in\Omega
\end{aligned}
\end{equation}
and verifying the boundary condition $\xx(0)\in\Omega$,
where $\AA$ and $\DD$ are defined here as
\begin{align*}
\AA&=\cb{\alpha\in L^{\infty}([0,t_f]) \vert 0\leq\alpha(t)\leq 1,\forall t\in[0,t_f]},\\
\DD&=\cb{d \in L^{\infty}([0,t_f]) \vert 0 \leq d(t)\leq\dmax,\forall t\in[0,t_f]}. 
\end{align*}

\section{Analysis and numerical solution of SOCP}\label{socp}

As explored in Section \ref{model}, the system has an
asymptotically stable equilibrium $\xeq$ with strictly positive
microalgal biomass if and only if $d<\psi_\alpha(\Sin)$.
Considering $\AA=(0,1)$ and $\DD_\alpha=(0,\psi_\alpha(\Sin))$
for $\alpha\in\AA$, we define
$f_0^*(\uu)=f_0(\xx^*(\uu),\uu)=d\cdot c^*(\alpha,d)$
over $\UU=\AA\times\DD_\alpha$ which relaxes the condition $f(\xx,\uu)=0$.
By injecting the expression of $c^*$ from \eqref{eq:Xb-coord},
\eqref{eq:vin}, \eqref{eq:Xa-coord}, and \eqref{eq:psi-alpha} we obtain
the equivalent problem
\begin{align}
\max\limits_{(\alpha,d)\in\UU} f_0^*(\alpha,d)
&= \frac{d\pp{\Vin^*(\alpha,d) - \psi^{-1}(d)}}{\mu^{-1}(d)}\label{eq:g-a}\\
&= \pp{\Sin - \psi_\alpha^{-1}(d)}
   \pp{\frac{\alpha\beta\gamma d}{\mu^{-1}(d)}}\label{eq:g-d}
\end{align}
that is a constrained maximization problem.

\subsection{Convexity properties of the problem}

A maximization problem is convex if its domain is a convex set and
its objective function $g$ is concave (i.e. $-g$ is convex) 
\cite{boyd_2004}.
The complexity of the expression $f_0^*$ and of the definition
of the domain $\UU$ makes the problem's convexity analysis cumbersome.
However, we can prove weaker properties for
(\ref{eq:g-a}-\ref{eq:g-d}) that still guarantee the existence
of global maxima. In particular, we will rely on the notion of biconvexity~\cite{gorski_2007}.

\begin{remark}\label{monod-convex}
Every Monod function is strictly concave and its inverse is strictly
convex. Both a Monod function and its inverse are strictly positive
and strictly increasing.
\end{remark}

\begin{lemma}\label{lem:a}
$f_0^*$ is strictly concave with respect to $\alpha$.
\end{lemma}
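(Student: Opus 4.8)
The plan is to fix the dilution rate $d$ and prove that $\alpha\mapsto f_0^*(\alpha,d)$ is strictly concave on its (interval) domain. Reading off form~\eqref{eq:g-a}, for fixed $d$ the factor $d/\mu^{-1}(d)$ is a strictly positive constant and $\psi^{-1}(d)$ is a constant, so $f_0^*(\alpha,d)$ equals a strictly positive constant times $\Vin^*(\alpha,d)$ plus a constant. Strict concavity in $\alpha$ therefore reduces to strict concavity of $\alpha\mapsto\Vin^*(\alpha,d)$. First I would note that for this fixed $d$ the admissible set of $\alpha$ is an interval on which $1-\alpha>0$ and $d/(1-\alpha)<\phimax$, so that $s^*=\phi^{-1}\pp{d/(1-\alpha)}$ is well defined; these facts follow from the existence condition $d<(1-\alpha)\phi(\Sin)$ implied by membership in $\UU$.

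Next I would expand $\Vin^*$ via~\eqref{eq:vin} and~\eqref{eq:Xb-coord} as $\Vin^*(\alpha,d)=\alpha\beta\gamma\Sin-\beta\gamma\,\alpha\,\phi^{-1}\pp{d/(1-\alpha)}$. The term $\alpha\beta\gamma\Sin$ is affine in $\alpha$ and hence irrelevant to concavity, and $\beta\gamma>0$, so it suffices to prove that $F(\alpha):=\alpha\,\phi^{-1}\pp{d/(1-\alpha)}$ is strictly convex, as then $-\beta\gamma F$, and therefore $\Vin^*(\cdot,d)$, is strictly concave.

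To establish strict convexity of $F$, I would argue in two steps. First, the inner map $r(\alpha)=d/(1-\alpha)$ is strictly positive, strictly increasing and strictly convex on the admissible $\alpha$-interval. Composing with $\phi^{-1}$, which by Remark~\ref{monod-convex} is strictly positive, strictly increasing and strictly convex, yields $p:=\phi^{-1}\circ r$ that is again strictly positive, strictly increasing and strictly convex, since $p''=(\phi^{-1})''(r)\,(r')^2+(\phi^{-1})'(r)\,r''>0$ and $p'>0$. Second, $F=\alpha\cdot p(\alpha)$ is the product of the nonnegative increasing affine function $\alpha\mapsto\alpha$ and the strictly positive, increasing, convex function $p$; since $F''=2p'+\alpha\,p''\ge 2p'>0$ on the interval, $F$ is strictly convex. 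Chaining the reductions then yields strict concavity of $f_0^*$ in $\alpha$.

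The main obstacle I anticipate is precisely this product structure: one cannot simply invoke the convexity of the Monod inverse from Remark~\ref{monod-convex} together with a composition rule, because $\alpha$ both multiplies the convex term and enters it nonlinearly through $1/(1-\alpha)$. The crux is therefore the elementary but essential fact that a product of nonnegative, nondecreasing, convex functions is convex, together with a check that strictness survives (here guaranteed by the term $2p'>0$). A fully equivalent route, should the abstract argument feel delicate, is to substitute the closed form $\phi^{-1}(y)=k_s y/(\phimax-y)$ to obtain $F(\alpha)=k_s d\,\alpha/\pp{\phimax(1-\alpha)-d}$ and verify directly that $F''(\alpha)=2k_s d\,\phimax(\phimax-d)/\pp{\phimax(1-\alpha)-d}^3>0$, the denominator being positive and $\phimax-d>0$ exactly on the admissible domain.
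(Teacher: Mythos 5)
Your proposal is correct and follows essentially the same route as the paper's proof: reduce to strict concavity of $\Vin^*(\cdot,d)$, use the composition rule for $\phi^{-1}$ with the convex increasing map $\alpha\mapsto d/(1-\alpha)$, and then settle the product structure via the second-derivative identity $2p'+\alpha p''$ (the paper's version is the sign-flipped $2h'+\alpha h''$ with $h=\Sin-\phi^{-1}(d/(1-\alpha))$). Your explicit domain check and the closed-form verification of $F''>0$ are nice additions, but the core argument is the same.
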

\begin{proof}
From \eqref{eq:g-a}, it is clear that $f_0^*$ is concave
with respect to $\alpha$ if and only if
$\Vin^*(\alpha,d) = \alpha\beta\gamma\pp{\Sin - \phi^{-1}\pp{\frac{d}{1-\alpha}}}$
is concave since $f_0^*$ is affine with respect to $\Vin^*$ with a
positive coefficient $d/\mu^{-1}(d)$.
Moreover, the function $\alpha\mapsto d/(1-\alpha)$ is strictly positive,
increasing, and convex.
Since the composition of an increasing function (here $\phi^{-1}$)
with a strictly convex increasing function is also strictly convex increasing
function \cite{boyd_2004}, it follows that
$h(\alpha)=\Sin-\phi^{-1}(d/(1-\alpha))$ is strictly concave and decreasing.
Finally, $\alpha\mapsto \alpha h(\alpha)$ is strictly concave for $\alpha>0$
because its second derivative $\alpha\mapsto 2h'(\alpha)+\alpha h''(\alpha)$
is strictly negative given $h$ has strictly negative first and second
derivatives. Hence, $\Vin^*(\alpha,d)$ is strictly concave with
respect to $\alpha$ and then so is $f_0^*$.
\end{proof}

\begin{lemma}\label{lem:d}
$f_0^*$ is strictly log-concave w.r.t. $d$.
\end{lemma}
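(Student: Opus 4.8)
\section*{Proof proposal}

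The plan is to pass to the logarithm, exploiting the fact that $\log$ turns the product/quotient structure of \eqref{eq:g-a} into a sum of terms that can be treated separately. Writing $f_0^*$ as in \eqref{eq:g-a}, for fixed $\alpha$ we have
\[
\log f_0^*(\alpha,d) = \log d + \log\pp{\Vin^*(\alpha,d) - \psi^{-1}(d)} - \log\mu^{-1}(d).
\]
I would show that each of the three summands is concave in $d$, with at least one strictly concave, so that their sum is strictly concave in $d$, which is exactly strict log-concavity of $f_0^*$.

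First I would dispatch the two outer terms. The term $\log d$ has second derivative $-1/d^2<0$, hence is strictly concave; this single term will carry the strictness of the whole sum. For the last term I would use the explicit inverse $\mu^{-1}(d)=\qmin\mumax/(\mumax-d)$ coming from $\mu(q)=\mumax(1-\qmin/q)$, so that $-\log\mu^{-1}(d)=\log(\mumax-d)-\log(\qmin\mumax)$, whose second derivative is $-1/(\mumax-d)^2<0$; hence this term is concave.

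The core of the argument is the middle term. Setting $g(d):=\Vin^*(\alpha,d)-\psi^{-1}(d)$, I would first show $g$ is concave in $d$. Since $\phi$ is a Monod function, Remark~\ref{monod-convex} gives that $\phi^{-1}$ is convex and increasing, and composing it with the affine increasing map $d\mapsto d/(1-\alpha)$ preserves convexity, so $d\mapsto\Sin-\phi^{-1}(d/(1-\alpha))$ is concave; multiplying by the positive constant $\alpha\beta\gamma$ shows $\Vin^*(\alpha,d)$ is concave in $d$. Likewise $\psi$ is a Monod function, so $\psi^{-1}$ is convex and $-\psi^{-1}(d)$ is concave, whence $g$ is concave. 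Next, on the admissible domain $\DD_\alpha=(0,\psi_\alpha(\Sin))$ the functional equilibrium $\xeq$ exists, so $c^*=(\Vin^*-v^*)/q^*>0$ with $v^*=\psi^{-1}(d)$ forces $g(d)=\Vin^*-\psi^{-1}(d)>0$. As $\log$ is concave and increasing, composing it with the concave positive function $g$ yields that $\log g(d)$ is concave.

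Summing the three concave terms, one of which ($\log d$) is strictly concave, gives that $\log f_0^*$ is strictly concave in $d$, i.e.\ $f_0^*$ is strictly log-concave in $d$. The only delicate point is the middle term: one must verify both that $g$ is \emph{positive} on the entire admissible interval---which is precisely the condition guaranteeing existence of $\xeq$---and that it is concave, before invoking the monotone-composition rule for $\log\circ\,g$. The two outer terms are immediate by direct differentiation.
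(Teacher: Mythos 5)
Your proof is correct and is essentially the paper's own argument in lightly rearranged form: your middle term $\Vin^*(\alpha,d)-\psi^{-1}(d)$ equals $\alpha\beta\gamma\pp{\Sin-\psi_\alpha^{-1}(d)}$, which is (up to a positive constant) the paper's first factor in \eqref{eq:g-d}, while your $\log d-\log\mu^{-1}(d)$ is the logarithm of the paper's second factor, the concave parabola $d(\mumax-d)/(\qmin\mumax)$. Both arguments rest on the same facts---convexity of the inverse Monod functions $\phi^{-1}$ and $\psi^{-1}$, the explicit form of $\mu^{-1}$, and positivity on $\DD_\alpha$---so your three-summand decomposition versus the paper's product of two strictly log-concave factors is only a cosmetic difference.
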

\begin{proof}
Function $\phi_\alpha^{-1}(d)$ defined in \eqref{eq:psi-alpha} is
strictly convex with respect to $d$ for all $\alpha\in\AA$ since $\phi^{-1}$
is increasing and convex, and so is $\psi^{-1}$.
It follows that $\Sin - \phi_\alpha^{-1}(d)$ is strictly concave,
and positive for all $d\in\DD_\alpha$ thus strictly log-concave.
Moreover, $\alpha\beta\gamma d/\mu^{-1}(d)$ is a concave parabola
and strictly positive for $d\in\DD_\alpha$, then it is strictly log-concave.
From \eqref{eq:g-d}, $f_0^*$ is the product of two strictly log-concave
functions, hence it is strictly log-concave w.r.t. $d$ \cite{boyd_2004}.
\end{proof}
\begin{proposition}\label{biconvex}
$f_0^*$ is strictly log-biconcave over $\UU$.
\end{proposition}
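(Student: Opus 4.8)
The plan is to unpack the definition of log-biconcavity and then discharge its two ingredients separately: a geometric claim that the feasible set $\UU$ is biconvex, and a pointwise claim that $f_0^*$ is strictly log-concave along each coordinate. Following \cite{gorski_2007}, $f_0^*$ is strictly log-biconcave over $\UU$ provided $\UU$ is a biconvex set --- i.e.\ every section at fixed $\alpha$ and every section at fixed $d$ is convex --- and, for each fixed value of one variable, $f_0^*$ is strictly log-concave in the other. The second ingredient is essentially already in hand, so the real work is the first.

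To establish biconvexity of $\UU=\cb{(\alpha,d):\alpha\in(0,1),\,0<d<\psi_\alpha(\Sin)}$, I would argue one family of sections at a time. The sections at fixed $\alpha$ are the intervals $\DD_\alpha=(0,\psi_\alpha(\Sin))$, hence trivially convex. The sections at fixed $d$ are $\cb{\alpha\in(0,1):d<\psi_\alpha(\Sin)}=\cb{\alpha:\psi_\alpha^{-1}(d)<\Sin}$, so it suffices to show that $\alpha\mapsto\psi_\alpha^{-1}(d)$ is convex, since the strict sublevel sets of a convex function are intervals. From \eqref{eq:psi-alpha} this map is the sum of $\alpha\mapsto\psi^{-1}(d)/(\alpha\beta\gamma)$, a positive constant times $1/\alpha$ and thus convex on $(0,1)$, and $\alpha\mapsto\phi^{-1}(d/(1-\alpha))$, which is convex as the composition of the convex increasing map $\alpha\mapsto d/(1-\alpha)$ with the convex increasing function $\phi^{-1}$ (Remark \ref{monod-convex}). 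The sum is convex, so the $d$-sections are intervals and $\UU$ is biconvex.

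For the pointwise claim, log-concavity in $d$ at fixed $\alpha$ is exactly Lemma \ref{lem:d}. For the $\alpha$-direction I would invoke Lemma \ref{lem:a}, which gives strict concavity, together with the fact that $f_0^*=d\cdot c^*>0$ throughout $\UU$ (every point of $\UU$ carries an existing coexistence equilibrium $\xia$ with $c^*>0$). A strictly concave positive function is strictly log-concave: for $\alpha_0\ne\alpha_1$ and $\theta\in(0,1)$, strict concavity gives $f_0^*(\theta\alpha_0+(1-\theta)\alpha_1)>\theta f_0^*(\alpha_0)+(1-\theta)f_0^*(\alpha_1)$, and applying the strictly increasing concave function $\log$ yields the strict log-concavity inequality. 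Assembling the biconvexity of $\UU$ with strict log-concavity in each coordinate gives the proposition.

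The main obstacle is the convexity of the $\alpha$-sections, i.e.\ of $\alpha\mapsto\psi_\alpha^{-1}(d)$; the rest is bookkeeping built on Remark \ref{monod-convex} and Lemmas \ref{lem:a}--\ref{lem:d}. One subtlety to handle carefully is the implicit restriction $d/(1-\alpha)<\phimax$ needed for $\phi^{-1}$ to be finite: extending $\psi_\alpha^{-1}(d)$ by $+\infty$ where this fails keeps it an extended-real-valued convex function of $\alpha$ and leaves the sublevel-set argument intact, so the infeasible $\alpha$ are automatically excluded from the section.
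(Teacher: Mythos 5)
Your proof is correct, and on the decisive point it is more careful than the paper's own. The paper's proof is three sentences long: Lemma \ref{lem:a} plus positivity gives strict log-concavity in $\alpha$; Lemma \ref{lem:d} gives strict log-concavity in $d$; and $\UU=\AA\times\DD_\alpha$ is declared biconvex because $\AA$ and $\DD_\alpha$ are open intervals, hence ``trivially'' convex. You use the same two lemmas for the pointwise claims (including the same positivity-plus-strict-concavity argument in the $\alpha$-direction), but you correctly recognize that the domain step is not trivial: since $\DD_\alpha=(0,\psi_\alpha(\Sin))$ depends on $\alpha$, the set $\UU$ is not a genuine Cartesian product, and biconvexity requires checking convexity of the sections at fixed $d$, namely $\cb{\alpha\in(0,1):\psi_\alpha^{-1}(d)<\Sin}$, which the paper's wording glosses over. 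Your argument for this---convexity of $\alpha\mapsto\psi_\alpha^{-1}(d)$ as the sum of a positive multiple of $1/\alpha$ and the composition $\phi^{-1}\pp{d/(1-\alpha)}$ of a convex increasing function with a convex increasing map, followed by the sublevel-set property of convex functions---supplies exactly the justification the paper omits, and your extended-value convention for $\phi^{-1}$ when $d/(1-\alpha)\geq\phimax$ is a clean way to keep that argument valid on all of $(0,1)$. In short, both proofs share the same skeleton built on Lemmas \ref{lem:a}--\ref{lem:d}, but where the paper buys brevity by treating the domain as a product of intervals, you buy completeness by proving the fixed-$d$ sections are convex; your version would in fact serve as a corrected, self-contained replacement for the paper's proof.
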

\begin{proof}
Since $f_0^*$ is positive and strictly concave with respect to $\alpha$ over $\AA$
(Lemma \ref{lem:a}), it is strictly logarithmically concave w.r.t. $\alpha$.
From Lemma \ref{lem:d}, $f_0^*$ is strictly logarithmically concave w.r.t $d$
over $\DD_\alpha$. Moreover, $\AA$ and $\DD_\alpha$ are open intervals therefore
trivially convex sets. Subsequently, $\UU=\AA\times\DD_\alpha$ is a biconvex
set and $\ln f_0^*$ is a biconcave function.
\end{proof}

\subsection{SOCP solution}

\begin{figure}[t]
    \centering
    \includegraphics[width=\columnwidth]{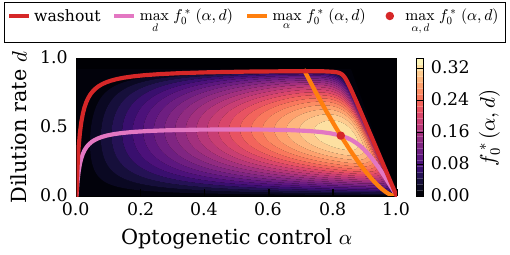}
    \vspace{-1em}
    \caption{SOCP objective function $f_0^*(\alpha,d)$ contours,
    its maxima along $\alpha$ and $d$, and its global maximum.} 
    \label{fig:socp-obj}
\end{figure}

Proposition \ref{biconvex} establishes the logarithmic biconvexity of the
objective function $f_0^*$ over $\UU$, which proves the existence,
although not the uniqueness, of global maximizers \cite{gorski_2007}.
Note that maximizers of $\ln f_0^*(\uu)$ are also maximizers of $f_0^*(\uu)$.
In practice, sequential maximization
(w.r.t. $\alpha$ then w.r.t. $d$) produces
a single global maximum in Figure \ref{fig:socp-obj}
at $\bar\alpha\approx 0.8251$ and $\bar d\approx 0.4409$ per day,
which gives approximately $330.786~m\gL{\cdot}\perday$ of harvested microalgae
for the parameters in Table \ref{tab:jc_params}, with $\Sin = 0.5~\gL$.

\section{DOCP Analysis: Application of the PMP}\label{docp}

In this section, we derive necessary conditions on the optimal controls using the PMP.

\subsection{Hamiltonian and switching functions}
For
$\xx=\transp{(s,e,v,q,c)}\in\Omega$,
$\lmd=\transp{(\ls,\le,\lv,\lq,\lc)}\in\mathbb{R}^5$,
$\uu\in\UU$, and a scalar $\lambda_0$, 
we define the Hamiltonian $H(\xx,\lmd,\lambda_0,\uu)
= \dotp{\lmd, f(\xx,\uu)} + \lambda_0 f_0(\xx,\uu)$, that is,
\begin{equation}\label{eq:H}
\begin{split}
    H =&\lambda_0 d(t)c
    +\ls\bb{- \frac{1}{\gamma}\phi(s)e + d(t)(\Sin - s)}\\
    &+ \le\bb{(1-\alpha(t))\phi(s) - d(t)}e\\
    &+ \lv\bb{\alpha(t)\beta\phi(s)e - \rho(v)c - d(t)v}\\
    &+ \lq\bb{\rho(v) - \mu(q)q}
    + \lc \bb{\mu(q) - d(t)}c.
\end{split}
\end{equation}
Note that this can be written as
\begin{equation}\label{eq:H1}
H = \tilde{H}(\xx,\lmd) + \zeta_\alpha(\xx,\lmd)\alpha(t)
  + \zeta_d(\xx,\lmd,\lambda_0)d(t)
\end{equation}
where $\tilde{H}(\xx,\lmd) =- \frac{\ls}{\gamma}\phi(s)e +\le\phi(s)e -\lv\rho(v)c+\lq\bb{\rho(v)-\mu(q)q} + \lc\mu(q)c$ and
\begin{align}
\zeta_\alpha (\xx,\lmd) =&\pp{\beta\lv -  \le} \phi(s)e,\label{z-a}\\
\zeta_d (\xx,\lmd,\lambda_0) =&~\ls (\Sin-s) - \le e
    - \lv v + (\lambda_0-\lc) c. 
\label{z-d}
\end{align}

\subsection{The co-state dynamics and transversality conditions}
From the PMP, along extremal solutions, there exists an adjoint state $\lmd : [0,t_f] \longrightarrow \mathbb R^5$ absolutely continuous and a $\lambda_0 \geq 0$ such that $(\lmd,\lambda_0)$ is non-trivial, satisfying 
\begin{equation}\label{lambda-dyn-H}
    \dlmd = - \partial H / \partial \xx.
\end{equation}
Next, since the final state $\xx(t_f)$ is free, the transversality conditions are given by
\begin{equation}\label{transv}
\lmd(t_f) = 0.
\end{equation}
These conditions, combined with the co-state dynamics, form a boundary value
problem that must be solved simultaneously with the state equations to
determine the optimal trajectories of the controls $\alpha(t)$ 
and $d(t)$ over $[0,t_f]$.

\subsection{The PMP maximization condition}

The PMP states that for the DOCP, the control strategy maximizes the
Hamiltonian $H$ over $[0,t_f]$.
Thus the PMP maximization condition requires that, for all $t \in [0, t_f]$,
\begin{equation}\label{PMPmax_cond}
H(\xx,\lmd,\lambda_0,\uu) = \max_{\vv\in\UU} H(\xx,\lmd,\lambda_0,\vv).
\end{equation}
Given the form of \eqref{eq:H1}, we establish the following result.

\begin{proposition} For a fixed $t_f>0$, 
the optimal controls $\alpha$ and $d$ satisfy, for almost all $t\in[0,t_f]$,
\begin{equation}\label{a-opt}
\alpha(t)=\begin{cases}
    0 & \text{if }\zeta_\alpha(t) < 0,\\
    1 & \text{if }\zeta_\alpha(t) > 0,\\
    \asing(t) & \text{if }\zeta_\alpha(t) = 0,~t\in[t_1^\alpha,t_2^\alpha],
\end{cases}
\end{equation}
\begin{equation}\label{d-opt}
d(t)=\begin{cases}
    0 & \text{if }\zeta_d(t) < 0,\\
    \dmax & \text{if }\zeta_d(t) > 0,\\
    \dsing(t) & \text{if }\zeta_d(t) = 0,~t\in [t_1^d,t_2^d].
\end{cases}
\end{equation}
\end{proposition}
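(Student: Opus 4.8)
The plan is to exploit the control-affine structure already isolated in Eq.~\eqref{eq:H1}, namely that the Hamiltonian decomposes as $H = \tilde{H}(\xx,\lmd) + \zeta_\alpha\,\alpha(t) + \zeta_d\,d(t)$, so that maximizing $H$ over $\uu=\transp{(\alpha,d)}\in\UU$ separates completely into two independent scalar linear-programming problems. First I would observe that, since $\tilde{H}$ does not depend on the control, the maximization condition~\eqref{PMPmax_cond} is equivalent to maximizing $\zeta_\alpha\,\alpha + \zeta_d\,d$ over the box $\AA\times\DD=[0,1]\times[0,\dmax]$. Because the two terms involve distinct control components and the admissible set is a Cartesian product, the maximum is attained by independently maximizing each affine term.

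Next I would treat each term as a linear function of a single variable on a bounded interval. For the $\alpha$-term, $\alpha\mapsto\zeta_\alpha\,\alpha$ on $[0,1]$ is linear, so its maximizer is $\alpha=1$ when the slope $\zeta_\alpha>0$ and $\alpha=0$ when $\zeta_\alpha<0$; identically, for the $d$-term on $[0,\dmax]$ the maximizer is $d=\dmax$ when $\zeta_d>0$ and $d=0$ when $\zeta_d<0$. This directly yields the bang-bang branches of~\eqref{a-opt} and~\eqref{d-opt}. I would emphasize that these hold for almost all $t$, consistent with the $L^\infty$ admissible classes $\AA$ and $\DD$ and the measurability of the extremal controls.

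The remaining case is the vanishing of a switching function on a set of positive measure, which is where the singular controls $\asing$ and $\dsing$ enter. Here I would note that when $\zeta_\alpha\equiv 0$ on a subinterval $[t_1^\alpha,t_2^\alpha]$, the $\alpha$-term contributes nothing to $H$ and the maximization condition leaves $\alpha$ undetermined by~\eqref{PMPmax_cond} alone; the singular value $\asing$ must instead be fixed by requiring $\zeta_\alpha$ to remain identically zero along the arc, i.e.\ by differentiating the identity $\zeta_\alpha(t)=0$ in time (using the state and co-state dynamics~\eqref{lambda-dyn-H}) until $\alpha$ appears explicitly, and solving for it. The same reasoning applies to $\dsing$ via $\zeta_d(t)=0$. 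I expect this singular-arc analysis to be the main obstacle, since it requires verifying that the time-differentiated switching conditions can indeed be solved for an admissible control value in the interior of the respective interval, and that the corresponding generalized Legendre–Clebsch condition holds so that the singular arc is genuinely maximizing rather than minimizing. For the purpose of this Proposition, however, it suffices to characterize the optimal control on the three mutually exclusive regimes $\{\zeta<0\}$, $\{\zeta>0\}$, and $\{\zeta=0\}$, and to record that the zero-set case defines singular arcs whose control values $\asing,\dsing$ are determined by the persistence of the singularity.
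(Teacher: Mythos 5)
Your proposal is correct and takes essentially the same approach as the paper: both proofs exploit the affine dependence of $H$ on $(\alpha,d)$ in \eqref{eq:H1}, so that the PMP maximization condition \eqref{PMPmax_cond} separates into two independent scalar linear maximizations whose solutions are the bang values dictated by the signs of $\zeta_\alpha$ and $\zeta_d$, with vanishing switching functions on an interval giving rise to singular arcs. Your closing discussion of determining $\asing$ and $\dsing$ by repeatedly differentiating the identities $\zeta_\alpha=0$ and $\zeta_d=0$ (and checking Legendre--Clebsch) is exactly the derivation the paper declines to report ``due to space limitations,'' noting only that both singular controls are of order~1.
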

\begin{proof}
Expressions \eqref{a-opt}-\eqref{d-opt} follow directly from the application
of the PMP maximization condition \eqref{PMPmax_cond} in the light of 
\eqref{eq:H1}, which is affine w.r.t. both
controls $\alpha$ and $d$, such that the signs of the switching functions
$\zeta$ determine the bang controls.
If a switching function is zero over a time interval $[t_1,t_2]$
a singular arc may occur.
The derivation of the analytic expressions of $\asing$ and $\dsing$
(which are both of order 1) is not reported due to
space limitations.
\end{proof}

\subsection{The optimal controls at the final time}
We have the following result.
\begin{proposition}
\label{zeta-tf}
At the final time $t_f$,
   \begin{align}
    \zeta_\alpha(t_f) &= 0\quad\text{and}\quad
    \dot{\zeta}_\alpha(t_f)=0,\label{c1}\\
    \zeta_d(t_f) &= \lambda_0 c(t_f) > 0. \label{c2}
    \end{align}
Moreover, there exists a strictly positive $\varepsilon>0$,
    such that for all $t\in[t_f - \varepsilon, t_f]$,
    $d(t) = \dmax$.
\end{proposition}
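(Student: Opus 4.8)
The plan is to read everything off the transversality condition \eqref{transv}, $\lmd(t_f)=0$, substituted into the explicit switching functions \eqref{z-a}--\eqref{z-d}. First I would evaluate $\zeta_\alpha$ at $t_f$: since \eqref{z-a} gives $\zeta_\alpha=(\beta\lv-\le)\phi(s)e$ and both $\lv(t_f)=0$ and $\le(t_f)=0$, the prefactor vanishes and $\zeta_\alpha(t_f)=0$ irrespective of $\phi(s)e$. Likewise, inserting $\ls(t_f)=\le(t_f)=\lv(t_f)=\lc(t_f)=0$ into \eqref{z-d} kills every term except the last, leaving $\zeta_d(t_f)=(\lambda_0-\lc(t_f))c(t_f)=\lambda_0 c(t_f)$, which is the identity in \eqref{c2}.

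To upgrade \eqref{c2} to a strict inequality I would establish positivity of both factors. For $\lambda_0$: were $\lambda_0=0$, the adjoint equation \eqref{lambda-dyn-H} would be linear and homogeneous in $\lmd$, so the terminal value $\lmd(t_f)=0$ would propagate backward to $\lmd\equiv 0$ by uniqueness, contradicting the PMP non-triviality of $(\lmd,\lambda_0)$; hence $\lambda_0>0$. For $c(t_f)$: from \eqref{eq:dc}, $\dot c=(\mu(q)-d)c$ keeps $c$ of constant sign, so the initial condition of interest $c(0)>0$ yields $c(t)=c(0)\exp\!\pp{\int_0^t(\mu(q)-d)}>0$ throughout, in particular $c(t_f)>0$. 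Together these give $\zeta_d(t_f)=\lambda_0 c(t_f)>0$.

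The remaining identity $\dot\zeta_\alpha(t_f)=0$ in \eqref{c1} is the one that genuinely needs the co-state dynamics. The product rule gives $\dot\zeta_\alpha=(\beta\dlv-\dle)\phi(s)e+(\beta\lv-\le)\tfrac{\text{d}}{\dt}\pp{\phi(s)e}$. Its second summand vanishes at $t_f$ since the prefactor $\beta\lv-\le$ is already zero there. For the first summand I would use that $H$ is affine in $\lmd$ with the only $\lmd$-independent contribution $\lambda_0 f_0=\lambda_0 dc$ touching the $c$-component alone; consequently $\partial H/\partial e$ and $\partial H/\partial v$ reduce to $\dotp{\lmd,\cdot}$ and vanish at $\lmd(t_f)=0$, so \eqref{lambda-dyn-H} yields $\dle(t_f)=\dlv(t_f)=0$. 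Thus $\beta\dlv-\dle$ also vanishes at $t_f$, the first summand drops out, and $\dot\zeta_\alpha(t_f)=0$.

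Finally, the terminal bang follows from continuity: $\zeta_d$ is a continuous function of the absolutely continuous pair $(\xx,\lmd)$, so the strict inequality $\zeta_d(t_f)>0$ persists on a left-neighbourhood, giving $\varepsilon>0$ with $\zeta_d(t)>0$ on $[t_f-\varepsilon,t_f]$, and the control law \eqref{d-opt} forces $d(t)=\dmax$ there. I expect the one delicate point to be the deduction $\lambda_0>0$, since it is exactly what turns the transversality identity \eqref{c2} into a strict inequality and hence produces the terminal bang. The vanishing of $\dle(t_f)$ and $\dlv(t_f)$ is a clean byproduct of the linearity of $H$ in the adjoint, but I would be careful to note that $\dlc(t_f)=-\lambda_0 d(t_f)$ need \emph{not} vanish---the $\lambda_0 dc$ term spoils the pattern for the $c$-component---which is precisely why the argument for \eqref{c1} must go through the specific combination $\beta\lv-\le$ in $\zeta_\alpha$ rather than through a blanket claim $\dlmd(t_f)=0$.
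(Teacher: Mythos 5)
Your proof is correct and follows essentially the same route as the paper's: transversality $\lmd(t_f)=0$ substituted into the switching functions gives \eqref{c1}--\eqref{c2}, the costate dynamics give $\dot\zeta_\alpha(t_f)=0$, positivity of $\lambda_0$ and $c$ gives the strict inequality, and continuity of $\zeta_d$ yields the terminal bang via \eqref{d-opt}. You in fact fill in two details the paper leaves implicit --- the contradiction argument showing $\lambda_0>0$ (via backward uniqueness for the homogeneous linear adjoint equation) and the observation that $\dlc(t_f)=-\lambda_0 d(t_f)$ need not vanish, so the argument must use the specific combination $\beta\lv-\le$ --- both of which are accurate and strengthen the exposition.
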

\begin{proof}
The expressions in~\eqref{c1} and~\eqref{c2} follow from those of $\zeta_\alpha$ and $\zeta_d$, the costate dynamics~\eqref{lambda-dyn-H}, and the transversality conditions
\eqref{transv}.
The inequality in~\eqref{c2} follows from $\lambda_0>0$ and  $c(t)$ being
positive for every $c(0)>0$.
The second statement follows from \eqref{c2} and the continuity of $c$. 
\end{proof}

Proposition \ref{zeta-tf} asserts that the optimal dilution strategy $d(t)$
ends with a constant \textit{bang} arc $\dmax$. This makes sense from a biotechnological standpoint since
it corresponds to harvesting the remaining algae from
the bioreactor.
The final stretch of $\alpha(t)$
cannot be characterized in the same manner since a singular
arc occurs only if $\zeta_\alpha$ is zero on a \textit{non-empty}
interval, which was not confirmed nor excluded.

\section{Numerical solution of the DOCP and comparison with SOCP}\label{results}

In this section, we employ a direct optimization method that converts
the DOCP into a nonlinear programming
problem (NLP), wherein the state variables as well as the two control
functions $\alpha$ and $d$ are discretized.
We implement the DOCP over a large time interval ($t_f=20$ days) and solve it in {\tt Bocop}~\cite{Bocop} with the following settings: {\tt Gauss II} discretization (implicit Gauss-Legendre 2-stage scheme of order 4); 7000 time steps; tolerance (Ipopt option) $10^{-14}$. 
In the numerical illustration, we utilize the parameter values of Table
\ref{tab:jc_params}, with $\Sin = 0.5~\gL$, and we consider the experimental initial condition
$\xx_{\text{init}}=\transp{(s_0,e_0,v_0,q_0,c_0)}=\transp{(0.1629, 0.0487, 0.0003, 17.7, 0.035)}$ (in the relevant units).

\subsection{The optimal optogenetic control $\alpha(t)$}

The numerically optimized control $\alpha$ (Figure \ref{fig:docp-control}, top) is primarily
\textit{bang-bang}, with short singular arcs between the recurring bangs.
The corresponding switching function $\zeta_\alpha$ is obtained by injecting
the state and costate trajectories into \eqref{z-a}.
The figure shows correspondence between
the sign of the switching function $\zeta_\alpha$ and the optimal $\alpha$, with occurrence of short singular arcs
($\zeta_\alpha=0$ over short time-intervals).
This confirms the properties of 
the control \eqref{a-opt} obtained from the PMP.
We also observe $\zeta_\alpha(t_f)=0$ as per Proposition \ref{zeta-tf}. Concerning the comment on $\alpha$ after the proposition, here a singular arc near $t_f$ is not apparent.

\begin{figure}[t]
    \centering
    \includegraphics[width=\columnwidth]{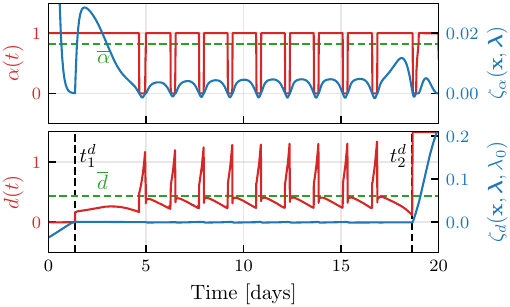}
    \caption{Optimal controls $\alpha(t)$ and $d(t)$ [red] and switching functions $\zeta_\alpha(t)$ and $\zeta_d(t)$ [blue]
    over $t\in[0,t_f]$, along with the optimal static controls $\bar{\alpha}$
    and $\bar{d}$ [dashed green].}
    \label{fig:docp-control}
\end{figure}

\subsection{The optimal dilution rate control $d(t)$}

The numerically optimized control $d$ (Figure \ref{fig:docp-control}, bottom) has three major
phases: a minimal bang arc $d(t)=0$ for all $t\in[0,t_1^d)$, then a singular arc
$\dsing(t)$ for $t\in[t_1^d,t_2^d]$, followed by a maximal bang arc $d(t)=\dmax$
for $t\in(t_2^d,t_f]$ as per Proposition \ref{zeta-tf}.
This illustrates the properties of the control \eqref{d-opt} obtained from the PMP.
The singular arc $\dsing(t)$ exhibits a cyclic-like behavior that appears to
be paced by the optimal control $\alpha(t)$.
This behaviour will be object of further investigation.

\subsection{SOCP and DOCP comparison}

\begin{figure}[t]
    \centering
    \includegraphics[width=\columnwidth]{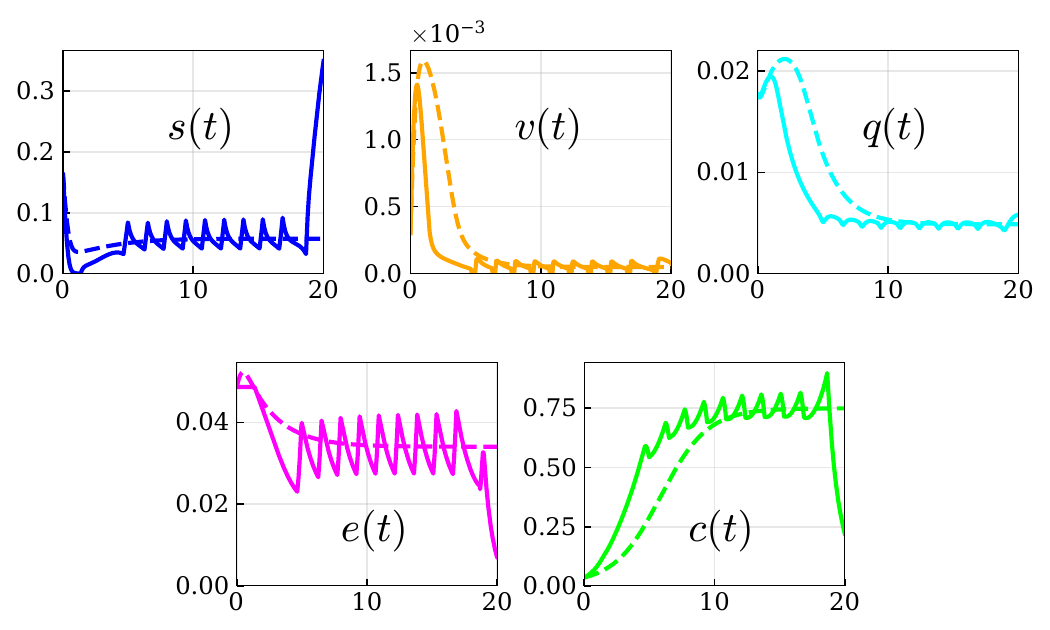}
    \caption{The optimal state trajectories plotted against time
    over 20 days, under the optimal dynamic control $u(t)$
    [solid lines] and under the static optimal control
    $\bar{u}$ [dashed lines].
    Concentrations are given in $\gL$ and the internal cell quota is given in $g{\cdot}g^{-1}$}
    \label{fig:ocp-state}
\end{figure}

The static optimal control law $\bar{\uu}$ obtained in Section \ref{socp} for the equilibrium state $\xx^*$ is compared with the dynamic optimal control law solving the DOCP with initial condition $\xx(0)=\xx_{\text{init}}$ (see Figure \ref{fig:docp-control} for a comparison of the control laws). First, we apply both laws to the system starting from initial condition $\xx(0)=\xx_{\text{init}}$. 
The resulting state trajectories 
are illustrated in Figure \ref{fig:ocp-state} for $t\in[0,t_f]$ and $t_f=20$ days.
In a second test, we apply both laws to the system starting from initial condition $\xx(0)=\xx^*$.
The performance of the two strategies in terms of total harvested microalgae over the period $[0,t_f]$ is compared in Table \ref{tab:x0}.
When the system starts from $\xx(0)=\xx_{\text{init}}$ (first row of the table), the dynamic strategy is optimal and it improves upon the static one by approximately 16.85\%, which
is remarkable for biotechnological applications.
The dynamic strategy outperforms the static one (by approximately 5.88\%) even when the system starts from $\xx(0)=\xx^*$ (second row), the scenario for which the SOCP was solved. 
We draw the conclusion that the cyclic-like regime of the DOCP solution significantly improves the objective criterion, a phenomenon known as \textit{overyielding}
(see~\cite{bayen2020optimal,bayen2020improvement} and references therein).

\begin{table}[h!]
\centering
\caption{Total harvested microalgae over $t_f=20$ days.}
\label{tab:x0}
\begin{tabular}{ccc}\toprule
$\xx(0)$& static optimal control & dynamic optimal control\\\midrule
$\xx_{\text{init}}$ & $4.665953~(g/L)$ & $5.45218~(g/L)$\\ [0.5em]
$\xeq$ & $6.615718~(g/L)$ & $7.005~(g/L)$\\
\bottomrule
\end{tabular}
\end{table}

\section{Conclusions}\label{conclusion}
In this study, we discussed static and dynamical optimal control of a microbial consortium model,
with the objective to enhance  productivity of algal biomass in the interest of biotechnological applications.
In particular, our investigation reveals that the dynamic control involves
\textit{bang-bang} and singular phases, leading to a significant improvement
of the control performance achieved with a static optimal solution at equilibrium.
In our future work, we aim to explore the design of robust feedback controls for practical
implementation in bioreactors.

\section*{ACKNOWLEDGMENT}

We thank Juan-Carlos Arceo-Luzanilla for contributing to establish parameter values, Antoine Sciandra for experimental data, and Inria project-team MICROCOSME for modelling discussions.

\bibliography{ref.bib}
\bibliographystyle{IEEEtran}

\end{document}